\documentclass[a4paper,reqno,11pt,oneside]{amsart}
\usepackage{amsmath}
\usepackage[left=2.61cm,right=2.61cm,top=2.72cm,bottom=2.72cm]{geometry}
\usepackage[colorlinks=true,urlcolor=blue,
citecolor=red,linkcolor=blue,linktocpage,pdfpagelabels,
bookmarksnumbered,bookmarksopen]{hyperref}
\usepackage[english]{babel}
\usepackage{graphicx}
\usepackage[toc,page,header]{appendix}
\usepackage{mathrsfs}
\usepackage{amssymb,amsmath, amsfonts, amsthm}
\usepackage{latexsym}
\usepackage{enumitem}

\usepackage[latin1]{inputenc}

\allowdisplaybreaks[1]

\numberwithin{equation}{section}

\renewcommand{\(}{\left(}
\renewcommand{\)}{\right)}
\renewcommand{\[}{\left[}
\renewcommand{\]}{\right]}

\newtheorem{theorem}{Theorem}[section]
\newtheorem{proposition}[theorem]{Proposition}
\newtheorem{corollary}[theorem]{Corollary}
\newtheorem{lemma}[theorem]{Lemma}
\theoremstyle{definition}
\newtheorem{remark}[theorem]{Remark}
\theoremstyle{definition}

\theoremstyle{definition}

\renewcommand{\le}{\leqslant}
\renewcommand{\ge}{\geqslant}

\newcommand{\beq}{\begin{equation}}
\newcommand{\eeq}{\end{equation}}
\newcommand{\beqs}{\begin{equation*}}
\newcommand{\eeqs}{\end{equation*}}
\newcommand{\beqn}{\begin{eqnarray}}
\newcommand{\eeqn}{\end{eqnarray}}
\newcommand{\beqns}{\begin{eqnarray*}}
\newcommand{\eeqns}{\end{eqnarray*}}
\newcommand{\bdoc}{\begin{document}}
\newcommand{\edoc}{\end{document}}
\newcommand{\be}{\begin{enumerate}}
\newcommand{\ee}{\end{enumerate}}
\newcommand{\bdescr}{\begin{description}}
\newcommand{\edescr}{\end{description}}
\newcommand{\ba}{\begin{array}}
\newcommand{\ea}{\end{array}}
\newcommand{\intR}{\int_{\mathbb R^N}}
\newcommand{\R}{\mathbb R}
\newcommand{\RN}{\mathbb{R}^N}
\newcommand{\B}{\mathbb B}
\renewcommand{\H}{\mathcal H}
\renewcommand{\L}{\mathbb L}
\newcommand{\parallelsum}{\mathbin{\!/\mkern-5mu/\!}}
\newcommand{\e}{\varepsilon}
\newcommand{\SD}{\Sigma_D}
 \renewcommand{\(}{\left(}
\renewcommand{\)}{\right)}
\renewcommand{\[}{\left[}
\renewcommand{\]}{\right]}
\renewcommand{\appendixpagename}{\centering Appendix}

\newcommand{\todo}[1]{\text{\colorbox{yellow}{#1}}}
\newcommand{\add}[1]{\text{\color{red}{#1}}}

\begin{document}
\title[Some Remarks on Patterns for 
Semilinear Neumann Problems]{Some Remarks on Patterns for 
Semilinear Neumann Problems}

\author{Marta Calanchi}
\address{M. Calanchi. Dipartimento di Matematica ``Federigo Enriques'',
Universit\`a degli Studi di Milano, Via Cesare Saldini 50, 20133 Milano, Italy}
\email{marta.calanchi@unimi.it}

\author{Giulio Ciraolo}
\address{G. Ciraolo. Dipartimento di Matematica ``Federigo Enriques'',
Universit\`a degli Studi di Milano, Via Cesare Saldini 50, 20133 Milano, Italy}
\email{giulio.ciraolo@unimi.it}

\author{Francesca Messina}
\address{F. Messina. Dipartimento di Matematica ``Federigo Enriques'',
Universit\`a degli Studi di Milano, Via Cesare Saldini 50, 20133 Milano, Italy}
\email{francesca.messina@unimi.it}

\subjclass[2010]{35J92, 35B53, 35B09, 35B33}

\keywords{Classification of solutions, unstable solutions, semilinear Neumann boundary value problems}

\begin{abstract} We study semilinear elliptic equations
 \begin{equation*}
\begin{cases}
-\Delta u = f(u) & \text{in } \Omega, \\
\partial_\nu u = 0 & \text{on } \partial\Omega,
\end{cases}
\end{equation*}
 with homogeneous Neumann boundary conditions in bounded domains. A classical result by Casten-Holland and Matano shows that stable nonconstant solutions cannot exist in convex domains, although unstable spatial patterns may still occur.
 In this paper we investigate rigidity properties of classical solutions 
without imposing stability assumptions and aim to identify structural conditions on the nonlinearity ensuring that all solutions are constant.
We prove that every classical solution of the Neumann problem is constant provided the nonlinearity satisfies a suitable `monotonicity' condition, which includes the cases where the nonlinearity has a fixed sign or changes sign in a controlled way around one of its zeros. This yields a rigidity result depending solely on the structure of the nonlinearity and does not require convexity assumptions on the domain. 
We also discuss the sharpness of our assumptions by constructing examples of nonlinearities for which nonconstant solutions exist. In particular, inspired by the approach of Lin-Ni-Takagi, we consider  exponential-type nonlinearities in dimension $N=2$, and show that when a parameter crosses a critical threshold, the associated Neumann problem admits nontrivial and nonconstant solutions for sufficiently small diffusion.
\end{abstract}

\maketitle

\section{Introduction}

Semilinear elliptic equations with homogeneous Neumann boundary conditions arise naturally in several contexts, ranging from reaction--diffusion models to geometric problems. A fundamental question concerns the rigidity of solutions in convex/nonconvex domains, namely whether nontrivial solutions may exist under suitable structural assumptions on the nonlinearity.

\medskip
In this paper we are concerned with the following  Neumann problem.
Let $\Omega \subset \R^N$, with $N \geq 2$, be a bounded  domain with boundary of class $C^2$. We consider classical solutions to the semilinear problem

\begin{equation}\label{semilinear}
\left\{
\begin{array}{rll}
- \Delta u & = f(u) & \quad \text{ in } \Omega, \vspace{0.2cm} \\
{u}_{\nu} &= 0, 
& \quad \text{on } \partial\Omega,
\end{array}
\right.
\\
\end{equation}
where $\nu$ denotes the outward unit normal to $\partial\Omega$ and $f \in C^1(\R)$.
\\
It is clear that a zero of the nonlinearity $f$  (whenever it exists) is a constant solution of the problem.

%
\medskip

The question of whether nonconstant solutions of \eqref{semilinear}
can be excluded altogether has been widely investigated in the literature. 
In general, there is no simple necessary and sufficient condition depending 
only on the nonlinearity $f$ ensuring that all solutions are constant. 
Instead, available results provide various sufficient conditions involving 
the geometry of the domain, spectral properties of the Laplacian, and 
structural assumptions on $f$.

A fundamental result in this direction is due to Casten and Holland \cite{CH} 
and independently to Matano \cite{Ma}. They proved that if $\Omega$ is a 
bounded convex domain of $\mathbb{R}^n$, $n\ge2$, then every stable solution 
of \eqref{semilinear} must be constant. 

\smallskip
As customary, 
we say that a weak solution $u$ to \eqref{semilinear} is stable if 

\begin{equation}\label{stable}
\int_\Omega |\nabla \phi|^2 dx -\int_\Omega f'(u)\phi^2 dx\ge 0,\quad \forall \phi\in C^1(\Omega).
\end{equation}

\medskip

\begin{theorem}[Casten-Holland, Matano \cite{CH,Ma}]\label{CHM} Let $\Omega\subset\mathbb R^N$ be a convex domain. 
There exists no stable non-constant solution to \eqref{semilinear}.

\end{theorem}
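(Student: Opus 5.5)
The plan is the classical one: use the partial derivatives $u_{x_j}$ as test functions in the stability inequality \eqref{stable}, and let convexity of $\Omega$ supply the sign of the boundary term. Assuming enough regularity ($u\in C^3(\Omega)\cap C^2(\overline\Omega)$, which follows from elliptic regularity since $f\in C^1$ and $\partial\Omega\in C^2$, possibly after an approximation argument near the boundary), differentiate the equation. Each $w_j=u_{x_j}$ satisfies the linearized equation $-\Delta w_j=f'(u)w_j$ in $\Omega$.

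First I multiply by $w_j$, integrate by parts and sum over $j$. This gives
\begin{equation*}
\int_\Omega |D^2u|^2\,dx-\int_\Omega f'(u)|\nabla u|^2\,dx=\int_{\partial\Omega}\sum_j u_{x_j}\,\partial_\nu u_{x_j}\,d\sigma=\frac12\int_{\partial\Omega}\partial_\nu |\nabla u|^2\,d\sigma .
\end{equation*}
Next I apply \eqref{stable} with $\phi=u_{x_j}$ and sum over $j$. Stability then says the left-hand side of the identity is at least
\begin{equation*}
\int_\Omega |D^2u|^2\,dx-\int_\Omega |D^2u|^2\,dx=0\quad\text{plus the boundary contribution.}
\end{equation*}
Combining the two, I get
\begin{equation*}
0\le \int_\Omega\big(|\nabla w|^2-f'(u)|w|^2\big)\,dx=\frac12\int_{\partial\Omega}\partial_\nu|\nabla u|^2\,d\sigma .
\end{equation*}

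The key step, and the main obstacle, is the boundary identity for Neumann functions. Since $u_\nu=0$ on $\partial\Omega$, $\nabla u$ is tangential there. Differentiating $\nabla u\cdot\nu=0$ along the tangential direction $\nabla u$ gives
\begin{equation*}
\frac12\partial_\nu|\nabla u|^2=D^2u(\nabla u,\nu)=-\,\mathrm{II}(\nabla u,\nabla u),
\end{equation*}
where $\mathrm{II}$ is the second fundamental form of $\partial\Omega$ with respect to the outer normal. For a convex domain $\mathrm{II}\ge0$, so the boundary integral is $\le 0$. I would carry out this computation carefully in local coordinates. If $\partial\Omega$ is only $C^2$, or the convex domain is merely Lipschitz, I would instead approximate $\Omega$ by smooth convex domains, or use a cutoff, to justify the integrations by parts.

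The conclusion goes as follows. Equality must hold throughout, so $\phi=u_{x_j}$ attains the minimum, zero, of the quadratic form in \eqref{stable}. This alone does not force $\nabla u=0$ immediately. I would therefore argue via the first Neumann eigenvalue of $L=-\Delta-f'(u)$. Stability means $\lambda_1(L)\ge0$, and each $u_{x_j}$ yields a Rayleigh quotient $\le 0$ with the correct sign structure. If $\lambda_1>0$, then $\nabla u\equiv0$. If $\lambda_1=0$, each $u_{x_j}$ is a first eigenfunction satisfying the Neumann condition (it is a minimizer), so it is simple and of one sign. Moreover $u_{x_j}$ must be a multiple of a positive eigenfunction $\varphi$, say $u_{x_j}=c_j\varphi$. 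Then $\nabla u=c\,\varphi$ is never tangent to $\partial\Omega$ unless $c=0$: for a bounded domain some boundary point has normal parallel to $c$, while the Neumann condition forces $c\cdot\nu\,\varphi=0$ there, and $\varphi>0$. Hence $\nabla u\equiv0$ and $u$ is constant.
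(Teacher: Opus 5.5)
The paper gives no proof of this statement: it is quoted from Casten--Holland and Matano. Your proposal reproduces the classical argument of those references. You test stability with $u_{x_j}$, identify the boundary term as $-\int_{\partial\Omega}\mathrm{II}(\nabla u,\nabla u)\,d\sigma\le 0$, and treat the degenerate case $\lambda_1=0$ via simplicity of the first Neumann eigenvalue. The argument is correct; only a few points need tightening.

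First, the sentence ``$\int|D^2u|^2-\int|D^2u|^2=0$ plus the boundary contribution'' is garbled. Write $Q(\phi)=\int_\Omega(|\nabla\phi|^2-f'(u)\phi^2)\,dx$. Stability gives $Q(u_{x_j})\ge 0$ for each $j$, and the boundary computation gives $\sum_j Q(u_{x_j})\le 0$, so every $Q(u_{x_j})$ vanishes.

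Second, in the last step you use $\varphi>0$ at a boundary point, which does not follow from positivity in $\Omega$ alone. It follows from Hopf's lemma applied to $-\Delta\varphi+M\varphi=(f'(u)+M)\varphi\ge 0$ with $M\ge\max_{\overline\Omega}|f'(u)|$. A boundary zero would give $\partial_\nu\varphi<0$, contradicting the natural condition $\partial_\nu\varphi=0$ that $\varphi$ inherits from being a minimizer.

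Third, with $f\in C^1$ and $\partial\Omega\in C^2$, elliptic regularity gives $u\in W^{3,p}_{\mathrm{loc}}(\Omega)\cap W^{2,p}(\Omega)$, not $C^3(\Omega)\cap C^2(\overline\Omega)$. So neither the integration by parts for $w_j$ nor the trace of $\partial_\nu|\nabla u|^2$ is immediate. You can avoid third derivatives entirely. By the chain rule and $u_\nu=0$, $\int_\Omega f'(u)|\nabla u|^2=\int_\Omega\nabla f(u)\cdot\nabla u=-\int_\Omega f(u)\Delta u=\int_\Omega(\Delta u)^2$, hence
\begin{equation*}
\sum_j Q(u_{x_j})=\int_\Omega|D^2u|^2\,dx-\int_\Omega(\Delta u)^2\,dx=-\int_{\partial\Omega}\mathrm{II}(\nabla u,\nabla u)\,d\sigma .
\end{equation*}
The last equality is Grisvard's boundary version of Reilly's identity, valid for $u\in H^2(\Omega)$ with $u_\nu=0$ on a $C^2$ domain.
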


In other words, convex domains 
do not support stable spatial patterns. However, this result does not exclude 
the existence of unstable nonconstant solutions. 

\medskip

If we call a \emph{pattern} any non-constant solution to \eqref{semilinear}, the  result mentioned above states that stable patterns cannot exist in convex domains. Besides its intrinsic mathematical interest, this theorem has significant implications for the classification of solutions and for the analysis of the asymptotic behavior of the associated evolution problems.

To rule out nonconstant solutions completely, further assumptions are typically 
required. Several works establish nonexistence results under conditions involving 
the structure of the nonlinearity, geometric properties of the domain, or 
smallness assumptions on the nonlinearity itself. In particular, criteria have 
been obtained showing that when the nonlinearity is sufficiently small or when 
certain geometric constraints are satisfied, the only solutions of the Neumann 
problem are constants.

For example, work by  Nordmann (\cite{No}) provides criteria for the nonexistence of nonconstant solutions involving
the geometry of the domain
and bounds on the size of the nonlinearity $f$. 
These results show that under suitable geometric conditions and smallness of the nonlinearity, no patterns exist at all.

%
%

\medskip
Another common approach relies on the spectral properties of the Neumann Laplacian. 
Let $\lambda_2(\Omega)$ denote the first nonzero Neumann eigenvalue of $-\Delta$ in 
$\Omega$. Roughly speaking, if the derivative $f'(u)$ remains below this spectral 
threshold, bifurcation from constant solutions cannot occur and one expects that 
all solutions are constant. Conversely, when the derivative of the nonlinearity 
crosses a Neumann eigenvalue, branches of nonconstant solutions may appear through 
bifurcation phenomena (see e.g \cite{CR1}).

\medskip

The appearance of patterns  has also been extensively studied. 
A classical example is provided by the work of Lin, Ni and Takagi (\cite{LNT}) on semilinear 
Neumann problems with nonlinearities of logistic or power type, where spike-layer 
solutions can arise under suitable parameter regimes (see also \cite{GPW} and the references therein). These results illustrate 
how the interplay between the structure of $f$, the parameters of the problem, 
and the geometry of the domain may lead to the formation of spatially nonhomogeneous 
solutions.

Taken together, these results show that the presence or absence of nonconstant 
solutions for \eqref{semilinear} seems to be  governed by a delicate interaction 
between the geometry of the domain, the spectral properties of the Neumann 
Laplacian, and the qualitative structure of the nonlinearity.

\medskip
%
All these results have been extended in various directions. In particular, nonlinear elliptic operators, different boundary conditions, manifolds, unbounded or more general domains, as well as certain classes of systems have been considered. We refer to \cite{BMMP, BPT, DPV, DPV2} and the references therein for further developments.
%


\smallskip
The assumptions of Theorem \ref{CHM} appear to be essentially optimal in general. On the one hand, the conclusion of the theorem does not hold for unstable patterns. 
 A first simple example is provided by Neumann eigenfunctions of the Laplacian in a disk, corresponding to the linear choice $f(u)=\mu u$ with $\mu$ a positive Neumann eigenvalue. 
This phenomenon is also closely related to Schiffer's conjecture, which states that disks are the only bounded domains whose Neumann eigenfunctions are constant along the boundary.

 
  On the other hand, the assumption that the domain is convex cannot be removed. Indeed, a result by Kohn and Sternberg~\cite{KS} shows that stable patterns can be constructed in certain star-shaped domains obtained as small perturbations of a convex domain. A similar counterexample can also be produced in dimension $N \ge 3$ for domains with positive mean curvature.

Nevertheless, when the domain is close to being convex, the construction of a stable pattern requires the nonlinearity to have a large amplitude. This observation suggests that, in non--convex domains, the nonexistence of patterns depends both on the geometry of the domain and on the strength of the nonlinearity.

\medskip

In \cite{CCR1}, Ciraolo-Corso-Roncoroni obtained a result in the spirit of the classical works of Casten--Holland and H. Matano by considering general weak solutions to \eqref{semilinear} in dimension $N\ge3$. They removed the stability assumption and proved that constant functions are the only weak solutions to \eqref{semilinear}, provided that the nonlinearity satisfies the following condition:
\begin{equation*} \label{f_condition}
t\mapsto \frac{f(t)}{t^{\frac{N+2}{N-2}}}
\quad \text{is non-increasing}.
\end{equation*}
This assumption seems, in a certain sense, optimal. Indeed, counterexamples can be constructed by adding a small linear perturbation to $f$ (see e.g.  \cite{CCR1}).

\medskip

The aim of this paper is to identify conditions depending solely on the nonlinearity that guarantee either the existence or the nonexistence of patterns. To the best of our knowledge, the existing literature does not provide results in this direction. In particular, no sufficient condition ensuring the non-existence of patterns in non--convex domains is currently known.

\medskip

%
 
 In this paper, we tackle this issue and establish a rigidity result showing that every classical solution must be constant, provided that the nonlinearity satisfies an appropriate monotonicity assumption. More precisely, our aim is to mimic the aforementioned comparison with the critical growth condition \eqref{f_condition}, adapting it to the present framework in order to derive the desired constancy result. As it turns out, however, this comparison with the critical growth condition is not essential.
%
%
%
 \smallskip
 
%
%
 
 
 \begin{theorem} \label{thm_main1}
Let $f \in C^1(\R)$ be a function that changes sign.
%
Let $\xi$ be a root of $f$ such that  
\begin{equation} \label{f_cond}
f(t)(t-\xi)\le 0, \quad \forall t\in\mathbb R.
\end{equation}
Then any classical solution  of 
 \eqref{semilinear} must be  constant in $\overline \Omega$.
\end{theorem}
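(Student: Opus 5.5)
Multiplying the equation by $u-\xi$ and integrating by parts, with the Neumann condition removing the boundary term, should do it. Set $w = u-\xi$. Since $\xi$ is a constant, $w$ is again a classical solution: $-\Delta w = f(u)$ in $\Omega$ and $\partial_\nu w = \partial_\nu u = 0$ on $\partial\Omega$. Because $u\in C^2(\Omega)\cap C^1(\overline\Omega)$ and $\Omega$ is bounded with $C^2$ boundary, we can multiply by $w$ and apply the divergence theorem. If $\Delta u$ is not known to be integrable up to the boundary, we first work on smooth interior approximating domains $\Omega_\varepsilon\uparrow\Omega$ and then pass to the limit, using $u\in C^1(\overline\Omega)$ and the continuity of $f(u)$ on $\overline\Omega$. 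This gives
\[
\int_\Omega |\nabla u|^2\,dx = \int_{\partial\Omega} (u-\xi)\,u_\nu \, d\sigma + \int_\Omega f(u)(u-\xi)\,dx = \int_\Omega f(u)(u-\xi)\,dx .
\]

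By hypothesis \eqref{f_cond}, the integrand $f(u(x))(u(x)-\xi)$ is pointwise $\le 0$. Hence $\int_\Omega|\nabla u|^2\,dx\le 0$, so $\nabla u\equiv 0$ in $\Omega$. Since $\Omega$ is a domain, it is connected, so $u$ is constant in $\Omega$, and by continuity it is constant in $\overline\Omega$.

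As a consistency check, the identity also forces $f(u)(u-\xi)\equiv 0$, and the equation then gives $f(u)=-\Delta u=0$, so the constant is a zero of $f$. The hypothesis that $f$ changes sign is not needed for the argument itself; presumably it excludes degenerate cases or relates to later remarks.

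The only delicate point is justifying the integration by parts up to the boundary for a classical solution, and that is handled by the interior approximation described above. I expect no other obstacle: the whole proof reduces to this one energy identity.
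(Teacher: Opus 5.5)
Your proof is correct and follows the same argument as the paper: test the equation with $u-\xi$, use $u_\nu=0$ to remove the boundary term, and conclude from $0\le\int_\Omega|\nabla u|^2\,dx=\int_\Omega f(u)(u-\xi)\,dx\le 0$. Two of your points are left implicit in the paper, and both are accurate: your justification of the integration by parts for classical solutions, and your observation that the sign-change hypothesis is never used.
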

This result has an immediate consequence.

\begin{corollary}\label{corollary} Let $f \in C^1(\R)$ and suppose $f(t)=p(t) \varphi(t)$ where 
 $p(t)> 0$  and 
$
 \varphi(t)$  is nonincreasing.  Then, there exists no  non-constant solution to \eqref{semilinear}.

\end{corollary}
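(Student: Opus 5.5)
The plan is to reduce the corollary to Theorem \ref{thm_main1}, handling separately the cases in which the theorem's hypotheses fail, namely when $f$ does not change sign. Since $p>0$, the zeros of $f$ coincide with those of $\varphi$, and $f$ has the same sign as $\varphi$ at every point. Throughout, solutions are classical, as in the setting of \eqref{semilinear}.

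\emph{Case 1: $f$ changes sign.} Then $\varphi$ changes sign. Being nonincreasing, $\varphi$ is positive somewhere to the left and negative somewhere to the right. Since $\varphi=f/p$ is continuous, it has a zero $\xi$. Monotonicity gives $\varphi(t)\ge \varphi(\xi)=0$ for $t<\xi$ and $\varphi(t)\le 0$ for $t>\xi$. Hence
\[
f(t)(t-\xi)=p(t)\,\varphi(t)\,(t-\xi)\le 0 \quad\text{for all } t\in\R,
\]
which is exactly condition \eqref{f_cond}. Theorem \ref{thm_main1} then shows that every classical solution is constant.

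\emph{Case 2: $f$ has a fixed sign, say $f\ge 0$ (the case $f\le 0$ is symmetric).} Here I would argue directly. Integrating the equation over $\Omega$ and using the divergence theorem with $u_\nu=0$ gives
\[
\int_\Omega f(u)\,dx=-\int_\Omega \Delta u\,dx=-\int_{\partial\Omega}u_\nu\,d\sigma=0 .
\]
Since $f(u)\ge 0$ is continuous, $f(u)\equiv 0$ in $\Omega$, so $u$ is harmonic with zero Neumann data. Multiplying by $u$ and integrating by parts yields $\int_\Omega|\nabla u|^2\,dx=0$. As $\Omega$ is connected, $u$ is constant. In particular, if $\varphi$ never vanishes then $f$ has a strict sign, the integral identity is impossible, and no solution exists at all.

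No step is a real obstacle. The only point needing care is that Theorem \ref{thm_main1} assumes $f$ changes sign, so the fixed-sign case must be covered separately by the integration argument. In Case 1, one also needs continuity of $\varphi=f/p$ to produce the zero $\xi$.
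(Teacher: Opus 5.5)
Your proposal is correct and follows the paper's route: the paper likewise checks \eqref{f_cond} at a zero $\xi$ of $\varphi$ and invokes Theorem \ref{thm_main1}, while the fixed-sign case is covered by the integration argument at the start of Section \ref{section2}; your case split makes explicit two points the paper's one-line proof leaves implicit, namely that $\xi$ exists and that the theorem formally requires $f$ to change sign. One small correction: $p$ is not assumed continuous, so $\varphi=f/p$ need not be continuous, but you do not need this, because $f$ and $\varphi$ have the same zeros and $\xi$ comes from the intermediate value theorem applied to the continuous function $f$.
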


We would like to emphasize that the result does not require any growth assumptions on the nonlinearity, nor does it require any bound on its derivative. We also mention that if $f$ has constant sign, then the result immediately follows by integrating \eqref{semilinear} in $\Omega$, applying the divergence theorem and the using the boundary condition $u_\nu =0$.

\smallskip


Our result completes previous works (\cite{CCR1, CCR2}) in  dimension $N\ge3$, where analogous rigidity statements were obtained under stronger hypotheses, such as the positivity of the solution and different structural conditions on the nonlinearity. In contrast, we are able to remove the positivity assumption and treat the full range of classical solutions.

As already discussed, in the absence of suitable assumptions on 
$f$ nonconstant solutions may arise.
 The case of eigenfunctions is not the only counterexample that can be presented. Even in the classical problem with polynomial-growth nonlinearity (e.g. the Emden-Fowler equation), there are precise results concerning the existence or non-existence of patterns. We refer, in particular, to the works of Lin, Ni, and Takagi (\cite{Lin-Ni,LNT}). In \cite{LNT}, the authors establish the following result:
\begin{theorem}[Lin-Ni-Takagi, \cite{LNT}]
Let $\Omega \subset \R^N$ be a bounded domain with boundary of class $C^2$. We consider classical solutions to the semilinear problem

\begin{equation}\label{polynomial}
\left\{
\begin{array}{rll}
- \epsilon\ \Delta u+u & = u^p & \quad \text{ in } \Omega, \vspace{0.2cm} \\
u&> 0, 
& \quad \text{ in } \Omega, \vspace{0.2cm}
\\
{u}_{ \nu} &= 0, 
& \quad \text{ on } \partial\Omega,
\end{array}
\right.
\\
\end{equation}
where $1<p<\frac{N-2}{N+2}$ for $N\ge 3$ and any $p>1$ for $N=1,2$. Then there exists $\epsilon_0>0$ such that for every $0<\epsilon<\epsilon_0$ there exists a non constant classical solution to \eqref{polynomial}, while for  large values od $\epsilon$, there is only the constant solution $u\equiv 1$.
\end{theorem}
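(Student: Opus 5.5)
For the existence part I will use a variational argument on the energy functional
\[
J_\epsilon(u)=\frac12\int_\Omega \big(\epsilon|\nabla u|^2+u^2\big)\,dx-\frac1{p+1}\int_\Omega u_+^{p+1}\,dx,\qquad u\in H^1(\Omega).
\]
For nonexistence I will combine a uniform $L^\infty$ bound with a Poincar\'e-type estimate. I read the range of $p$ as the Sobolev-subcritical one, $1<p<\frac{N+2}{N-2}$ for $N\ge3$; the displayed bound $\frac{N-2}{N+2}<1$ is presumably a typo. This subcriticality gives compactness of $H^1(\Omega)\hookrightarrow L^{p+1}(\Omega)$, so $J_\epsilon$ satisfies the Palais--Smale condition.

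\emph{Existence for small $\epsilon$.} The origin is a strict local minimum of $J_\epsilon$, and $J_\epsilon(tu)\to-\infty$ for any $u_+\not\equiv0$. The mountain pass theorem therefore gives a critical point $u_\epsilon$ at a level $c_\epsilon>0$. Testing the equation with $u_\epsilon^-$ shows $u_\epsilon\ge0$. The strong maximum principle together with Hopf's lemma (using $\partial_\nu u=0$) then gives $u_\epsilon>0$ in $\overline\Omega$, and elliptic regularity makes $u_\epsilon$ classical. Since $c_\epsilon>0$, $u_\epsilon\not\equiv0$.

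The only other positive constant solution is $u\equiv1$, whose energy $\big(\tfrac12-\tfrac1{p+1}\big)|\Omega|$ does not depend on $\epsilon$. So it suffices to prove $c_\epsilon\to0$. To do this, fix $x_0\in\partial\Omega$ and let $w$ be the radial ground state of $-\Delta w+w=w^p$ in $\R^N$. Take the test path $t\mapsto t\,\eta(x)\,w\big((x-x_0)/\sqrt\epsilon\big)$, with $\eta$ a cutoff near $x_0$. Rescaling by $y=(x-x_0)/\sqrt\epsilon$ gives $\max_t J_\epsilon\le C\epsilon^{N/2}$. Hence $c_\epsilon<\big(\tfrac12-\tfrac1{p+1}\big)|\Omega|$ for $\epsilon<\epsilon_0$, and so $u_\epsilon$ is nonconstant.

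\emph{Uniqueness for large $\epsilon$.} I will first establish a bound $\|u\|_{L^\infty(\Omega)}\le C$ that holds for all positive solutions with $\epsilon\ge1$. Granting this bound, write $u=\bar u+v$ with $\bar u$ the mean of $u$ and $\int_\Omega v=0$. Multiplying the equation by $v$ and integrating (the Neumann condition kills boundary terms) gives
\[
\epsilon\int_\Omega|\nabla v|^2+\int_\Omega v^2=\int_\Omega (u^p-\bar u^{\,p})\,v\le pC^{p-1}\int_\Omega v^2 .
\]
By the Neumann Poincar\'e inequality, $\int|\nabla v|^2\ge\mu_2\int v^2$, where $\mu_2$ is the first nonzero Neumann eigenvalue. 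Hence $(\epsilon\mu_2+1-pC^{p-1})\int v^2\le0$, so $v\equiv0$ once $\epsilon$ is large. Then $u$ is a positive constant with $u=u^p$, that is, $u\equiv1$.

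\emph{Main obstacle.} The hard step is the uniform $L^\infty$ bound. I would prove it by blow-up. Suppose $M_k=\max u_k\to\infty$ along solutions with $\epsilon_k\ge1$. Rescale around the maximum point as
\[
\tilde u_k(y)=M_k^{-1}u_k\big(x_k+\lambda_k y\big),\qquad \lambda_k=\sqrt{\epsilon_k}\,M_k^{-(p-1)/2}.
\]
Passing to the limit, one obtains a nontrivial bounded nonnegative solution of $-\Delta U=U^p$. This limit lives either in $\R^N$ or, after flattening the boundary and reflecting through the Neumann condition, in a half-space with Neumann data. In the subcritical range both cases are excluded by the Gidas--Spruck Liouville theorem, which gives the contradiction. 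For $N=2$ one could alternatively integrate the equation to get $\int u^p=\int u$ and then run a Moser-type bootstrap.
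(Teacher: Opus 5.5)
You have one genuine gap, in the blow-up proof of the uniform $L^\infty$ bound. The existence half is fine: it is the Lin--Ni--Takagi scheme that the paper itself adapts in Section 3 for $f_a$. There, a mountain-pass level $c_\epsilon=O(\epsilon^{N/2})$ is compared with the $\epsilon$-independent energy of the nonzero constant, and your rescaled ground state at a boundary point does the same job as the paper's cone function $\omega_\epsilon$. Your Poincar\'e argument for large $\epsilon$ is also correct, \emph{provided} $\|u\|_{L^\infty}\le C$ with $C$ independent of $\epsilon\ge1$. The paper gives no proof of that half and only points to an a priori estimate.

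The problem is the scale $\lambda_k=\sqrt{\epsilon_k}\,M_k^{-(p-1)/2}$. The rescaled domains $(\Omega-x_k)/\lambda_k$ converge to $\R^N$ or to a half-space only if $\lambda_k\to0$. You must allow $\epsilon_k\to\infty$, because the Poincar\'e step needs $C$ to survive $\epsilon\to\infty$. Then nothing prevents $\epsilon_k\gtrsim M_k^{p-1}$, i.e.\ $\lambda_k\not\to0$. In that case the rescaled domains stay bounded or shrink to a point, there is no entire or half-space limit, and the Gidas--Spruck theorem says nothing. This is exactly the regime that matters for large $\epsilon$, so your claimed dichotomy is false as stated.

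The missing case is handled by the Neumann condition, not by a Liouville theorem. Suppose $\lambda_k\ge c>0$ along a subsequence, and work on $\Omega$ itself with $\hat u_k=u_k/M_k$. It solves $-\Delta\hat u_k=\lambda_k^{-2}\hat u_k^{\,p}-\epsilon_k^{-1}\hat u_k$ with $\partial_\nu\hat u_k=0$.
\begin{itemize}
\item The right-hand side is bounded by $c^{-2}+1$, so $\hat u_k$ is bounded in $W^{2,q}(\Omega)$ for every $q<\infty$, hence uniformly Lipschitz.
\item Together with $\max\hat u_k=1$ and the $C^2$ regularity of $\partial\Omega$, this gives $\int_\Omega\hat u_k^{\,p}\ge c_0>0$.
\item Integrating the equation gives $\int_\Omega u_k^p=\int_\Omega u_k$, that is, $M_k^{p-1}\int_\Omega\hat u_k^{\,p}=\int_\Omega\hat u_k\le|\Omega|$.
\end{itemize}
This contradicts $M_k\to\infty$. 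Adding this case to your argument for $\lambda_k\to0$ gives the uniform bound, and the rest of your proof goes through. Your alternative for $N\le2$ (from $\int u^p=\int u$ plus a bootstrap) does not have this problem, since the factor $\epsilon^{-1}$ only helps there.
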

 For simplicity, we stated it here in the case of the polynomial nonlinearities $u\mapsto u^p$, although it extends to more general classes of nonlinear terms with polynomial growth. 

\medskip

\medskip 
In dimension $N=2$ every polynomial growth is allowed. Therefore, it seems that the ``critical'' growth is given by the exponential nonlinearity (which is also connected to the Liouville equation).  Drawing inspiration from the works of Lin, Ni, and Takagi, and following their approach, we  provide a family of nonlinearities for which condition \eqref{f_cond} in Theorem \ref{thm_main1} does not hold, and we prove the existence of nonconstant solutions to \eqref{semilinear}.

 Consider the  problem
\begin{equation}\label{semilinearexp}
\left\{
\begin{array}{rll}
-\epsilon \Delta u &= f_a(u) & \textmd{ in } \Omega \\
 u &> 0 & \textmd{ in } \Omega \\
u_\nu &= 0 & \textmd{ on } \partial \Omega \,,
\end{array}
\right.
\\
\end{equation}
where 
$
f_a(t):= e^t-1-a t 
$
 and  $a \in \R$ is a parameter.
We have the following result

\begin{theorem}\label{non_constant} For every $a\le 1$ the problem admits only the trivial solution, while for every $a>1$ there exists $\epsilon_0>0$ such that 
 for every $\epsilon\in(0,\epsilon_0)$ problem \eqref{semilinearexp} admits at least a non constant solution.

\end{theorem}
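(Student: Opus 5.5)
For $a\le 1$ the claim follows directly from the sign of $f_a$, and for $a>1$ I would follow Lin--Ni--Takagi, producing a mountain pass solution whose energy is $O(\epsilon)$ and therefore lies strictly below the energy of the positive constant solution.

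\emph{Case $a\le 1$.} For $t>0$ we have
\[
f_a(t)=e^t-1-at\ \ge\ e^t-1-t\ >\ 0 .
\]
Let $u$ be a classical solution of \eqref{semilinearexp}. Integrating the equation over $\Omega$ and using the divergence theorem with $u_\nu=0$ gives $\int_\Omega f_a(u)\,dx=0$. Since $u>0$, this is impossible. Hence no positive solution exists, and only the trivial one $u\equiv 0$ remains. This is exactly the "constant sign" remark after Corollary \ref{corollary}.

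\emph{Case $a>1$: variational setup.} Now $f_a<0$ on $(0,\xi_a)$ and $f_a>0$ on $(\xi_a,\infty)$, where $\xi_a>0$ is the unique positive zero. Thus $f_a(t)(t-\xi_a)\ge 0$, the reverse of \eqref{f_cond}, and the constant $\xi_a$ is a solution.

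I would extend $f_a$ by $0$ for $t<0$, set $F_a(t)=\int_0^{t^+}f_a$, and work with
\[
J_\epsilon(u)=\frac{\epsilon}{2}\int_\Omega|\nabla u|^2\,dx-\int_\Omega F_a(u)\,dx \quad \text{on } H^1(\Omega),\qquad N=2.
\]
Since $e^t\le C_\delta e^{\delta t^2}$ for every $\delta>0$, the Trudinger--Moser inequality on $H^1(\Omega)$ (valid for $C^2$ bounded domains) shows that $J_\epsilon$ is $C^1$. The growth is subcritical in the Moser sense, and the Ambrosetti--Rabinowitz condition $tf_a(t)\ge\theta F_a(t)$ holds for large $t$ with some $\theta>2$. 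Together these give boundedness of Palais--Smale sequences and, by compactness of the Moser embedding for subcritical exponential growth, the Palais--Smale condition at every level.

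Near $0$ we have $F_a(t)\approx\frac{1-a}{2}t^2<0$, so $0$ is a strict local minimum. More precisely, $J_\epsilon(u)\ge c\|u\|_{H^1}^2$ for small $\|u\|_{H^1}$, with $c$ depending on $\epsilon$; this is where the Moser bound controls the superquadratic part. Along any positive direction $J_\epsilon\to-\infty$. The mountain pass theorem therefore yields a critical point $u_\epsilon$ at level $c_\epsilon>0$.

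Testing the equation with $u_\epsilon^-$ gives $u_\epsilon\ge 0$. Elliptic regularity, using $e^{u_\epsilon}\in L^q$ for all $q$, makes $u_\epsilon$ classical. The strong maximum principle and the Hopf lemma at the boundary then give $u_\epsilon>0$ in $\overline\Omega$, since $u_\epsilon\not\equiv0$ because $c_\epsilon>0$.

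\emph{Nonconstancy via energy comparison.} The positive constant solutions are only $\xi_a$, with
\[
J_\epsilon(\xi_a)=-|\Omega|F_a(\xi_a)=|\Omega|\,\Bigl|\int_0^{\xi_a}f_a\Bigr|=:\kappa_a>0,
\]
which is independent of $\epsilon$. I would take $x_0\in\partial\Omega$ (or an interior point), a fixed nonnegative $\varphi\in C_c^\infty(\R^2)$, and $\varphi_\epsilon(x)=\varphi((x-x_0)/\sqrt\epsilon)$. Changing variables gives
\[
\max_{t\ge0}J_\epsilon(t\varphi_\epsilon)\le C\epsilon ,
\]
because both terms scale like $\epsilon^{N/2}=\epsilon$. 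The maximum is attained at a $t$ that is bounded independently of $\epsilon$, since the rescaled functional is $\epsilon$-independent up to the domain boundary. Hence $c_\epsilon\le C\epsilon<\kappa_a$ for $\epsilon<\epsilon_0:=\kappa_a/C$, so $u_\epsilon\ne\xi_a$ and $u_\epsilon$ is nonconstant.

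\emph{Main obstacle.} I expect the main difficulty to be the rigorous compactness, namely the Palais--Smale condition for exponential growth in dimension two. I would handle it through the subcritical Trudinger--Moser estimate: bounded Palais--Smale sequences have $e^{u_n}$ bounded in every $L^q$, so $f_a(u_n)\to f_a(u)$ in $L^2$ by Vitali's theorem. I also need to check that the mountain pass geometry near $0$ is uniform enough for the scaled test path to cross the mountain.
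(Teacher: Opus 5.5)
Your argument for $a\le 1$, the scaled test path, and the comparison $c_\epsilon\le C\epsilon<\kappa_a$ agree with the paper. The paper's cone $w_\epsilon$ is exactly your $t\varphi_\epsilon$ with $\varphi(y)=(1-|y|)^+$, after the substitution $x=t/\epsilon$. Your identity $J_\epsilon(\xi_a)=|\Omega|\,|F_a(\xi_a)|$ is also a cleaner way to see $K_\xi>0$.

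The genuine gap is your truncation. Extending $f_a$ by $0$ on $(-\infty,0)$ makes $F_a\equiv 0$ on $(-\infty,0]$. Hence every negative constant $u\equiv -s$ is a critical point of $J_\epsilon$ with $J_\epsilon(-s)=0$. This has three consequences:
\begin{itemize}
\item \emph{Geometry.} Your expansion $F_a(t)\approx\frac{1-a}{2}t^2$ holds only for $t>0$. The bound $J_\epsilon(u)\ge c\|u\|_{H^1}^2$ fails at $u=-s$, so $0$ is not a strict local minimum. Every sphere $\|u\|_{H^1}=\rho$ contains a point where $J_\epsilon=0=J_\epsilon(0)$, so the mountain pass geometry you invoke does not hold.
\item \emph{Compactness.} $u_n\equiv -n$ is an unbounded Palais--Smale sequence at level $0$. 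Your Ambrosetti--Rabinowitz computation only bounds $\|\nabla u_n\|_2$, because your functional has no zeroth-order coercive term on $\{u<0\}$ to control the mean of $u_n$. So your route to bounded Palais--Smale sequences breaks down.
\item \emph{Sign.} With Neumann conditions, testing with $u_\epsilon^-$ only gives $\nabla u_\epsilon^-=0$, i.e.\ $u_\epsilon^-$ is constant. You need $c_\epsilon>0$ to exclude $u_\epsilon\equiv -s$.
\end{itemize}

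The fix is the Lin--Ni--Takagi truncation: truncate only the superlinear part and keep the linear term. Write $f_a(t)=g(t)-(a-1)t$ with $g(t)=e^t-1-t$, and work with
\begin{equation*}
\tilde J_\epsilon(u)=\frac{\epsilon}{2}\int_\Omega|\nabla u|^2\,dx+\frac{a-1}{2}\int_\Omega u^2\,dx-\int_\Omega G(u^+)\,dx,\qquad G(s)=e^s-1-s-\frac{s^2}{2}.
\end{equation*}
Its quadratic part is the paper's $\frac12\|u\|_*^2$. So $0$ is a strict local minimum exactly as in the paper's Step 1, which uses the untruncated functional and simply asserts positivity of the critical point. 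Palais--Smale sequences are bounded by the Ambrosetti--Rabinowitz computation, since this quadratic part is coercive. Testing $-\epsilon\Delta u+(a-1)u=g(u^+)$ with $u^-$ gives $\epsilon\|\nabla u^-\|_2^2+(a-1)\|u^-\|_2^2=0$, so $u\ge 0$. The strong maximum principle and Hopf's lemma applied to $-\epsilon\Delta u+(a-1)u=g(u)\ge 0$ then give $u>0$ on $\overline\Omega$, so $u$ solves the original problem. Since $\tilde J_\epsilon=J_\epsilon$ on nonnegative functions, your test-path estimate and the comparison with $\xi_a$ carry over verbatim.

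Alternatively, you could keep your truncation. Using $J_\epsilon(u)\ge\tilde J_\epsilon(u^+)$ and the continuity of $u\mapsto u^+$ on $H^1$, every admissible path reaches level at least some $\alpha>0$. You could then invoke the version of the mountain pass theorem that needs only $c_\epsilon>\max\{J_\epsilon(0),J_\epsilon(e)\}$. But you would still owe a separate proof of the Palais--Smale condition at positive levels.
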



\medskip
%
The proof of the existence of non-constant patterns relies on the search for critical points of the functional associated with the problem, which is well defined on $H^1(\Omega)$. For $a>1$, and for every $\epsilon>0$, the problem posses the trivial solution and a positive constant solution $u\equiv\xi$. Following the ideas in \cite{LNT},  we establish the existence of a solution whose energy level is positive and strictly  lower than that of the constant solution $u\equiv\xi$.

\bigskip

\section{Proof of Theorem \ref{thm_main1}}\label{section2}
As already mentioned in the introduction, we first observe that a condition on the nonlinearity 
 that trivially guarantees the nonexistence of patterns is when the function $f$
does not change sign, i.e. one of the following condition is satisfied

 \begin{equation}\label{f_sign}f(t) \geq 0\quad {\rm  or}\quad  f(t) \leq 0, \ \forall t \in \R.\end{equation}

\smallskip
In this case, it is sufficient to integrate both sides of the equation over 
$\Omega$
 and use the boundary conditions. Indeed, let $u $ be  a solution of \eqref{semilinear},  with\  for instance $ f(u)\ge 0$. Then,
\medskip\noindent
$$
0=\int_\Omega -\Delta u  = \int_\Omega f(u)\ dx.$$ 
Since $f(u)\ge 0$ the identity implies that $ f(u)=0\ {\rm a.e.}$ which implies $ u={\rm constant}.$
\\
In particular, if the function $f$ has no zeros, then there are no solutions at all. If $f $ has zeros, then those are the only solutions to the problem.

\medskip
\begin{proof}[{Proof of Theorem \ref{thm_main1}}]


\medskip

\noindent
 Let $u$ be a solution of \eqref{semilinear}, and let $\xi\in\mathbb R$ be such that $f(\xi)=0$ and $f(t)(t-\xi) \le 0$. If we multiply by $u-\xi$ and integrate the equation, using the Neumann boundary conditions, we obtain, since 
$f(t)(t-\xi) \le 0$ for all $t\in\mathbb R$,

\begin{equation}
0 \le \int_\Omega|\nabla u|^2 dx=\int_\Omega f(u(x))(u(x)-\xi) dx\le 0.
\end{equation}
which implies that $u$ is constant and $f(u)(u-\xi)=0$ a.e., that is $u$  must be a root of $f$.
\end{proof}

\begin{remark}
In this case, any constant solutions are all stable, since if $\eta\in\mathbb R$ is a solution, it is a zero of $f$ with $f'(\eta)\le 0$.
\end{remark}

\begin{proof}[{Proof of Corollary \ref{corollary}}]
In the particular case $f(t)=\varphi(t)p(t)$, where $p>0$ and $\varphi$ non-increasing, the condition \begin{equation}
f(t)(t-\xi)=p(t)\varphi(t)(t-\xi)\le 0, \quad \forall t\in\mathbb R.
\end{equation}
is automatically satisfied, and Theorem \ref{thm_main1} applies.

\smallskip
\end{proof}

\section{Existence of positive patterns in generic domains in $\mathbb R^2$}
As pointed out in the introduction, in the absence of appropriate assumptions on $f$,  patterns solutions may appear. Eigenfunctions are not the only source of counterexamples. Indeed, even for the classical problem involving nonlinearities with polynomial growth -- such as the 
Emden-Fowler equation -- there exist sharp results addressing both the existence and the nonexistence of nontrivial patterns. 
 Motivated by the ideas developed by Lin, Ni, and Takagi (\cite{Lin-Ni,LNT}), and adopting a similar strategy,  we consider in dimension $n=2$, exponential-type nonlinearities 
 for which conditions \eqref{f_sign} or \eqref{f_cond}
 of Theorem \ref{thm_main1} fail, and we establish the existence of nonconstant solutions to \eqref{semilinear}.
%
%

\medskip
Let 
\begin{equation} \label{f_a}
f_a(t):= e^t-1-a t 
\end{equation}
for $t \in \R$ and where $a \in \R$ is a parameter.

 It is readily seen that for $a \leq 1$ we have $f_a \geq 0$, while for $a > 1$ the function $f_a$ does not have a sign. Hence, $f_a$ does not satisfy condition \eqref{f_sign}  whenever $a>1$. 
 
 Thus, only the constant solution $u=0$ exists when $a=1$. By letting $a=1+\delta$, with $\delta>0$ small, $f_a$ can be seen as a perturbation of the case $a=1$. The problem
\begin{equation*}
\begin{cases}
-\Delta u = e^u - 1 - (1+\delta) u & \textmd{ in } \Omega \\
u  > 0 & \textmd{ in } \Omega \\
u_\nu  = 0 & \textmd{ on } \partial \Omega \,.
\end{cases}
\end{equation*}
 admits  two constant solutions: $u\equiv 0$ and $u\equiv\xi$, for some $\xi=\xi(\delta) >0$. It is convenient to introduce a further parameter $\epsilon>0$ and consider the following problem
\begin{equation}\label{pbeps}
\left\{
\begin{array}{rll}
-\epsilon \Delta u  &= e^u - 1 - (1+\delta) u & \textmd{ in } \Omega \\
u  &>0 & \textmd{ in } \Omega \\
u_\nu  &= 0 & \textmd{ on } \partial \Omega \,.
\end{array}\\
\right.
\end{equation}

We adopt  a variational approach. The equation above is
  the Euler-Lagrange equation associated to the functional
\begin{equation} \label{Phi_eps}
\Phi_\epsilon (u) = \frac{\epsilon}{2} \int_\Omega |\nabla u|^2 dx  - \int_\Omega F_\delta(u) dx \,,
\end{equation}
where 
\begin{equation} \label{F_delta}
F_\delta(u) = e^u - 1- u-(1+\delta) \frac{u^2}{2} \,.
\end{equation}
Notice that the functional $F_\delta$ is well defined on the Sobolev space $H^1(\Omega)$, endowed with the norm
\begin{equation*}
\|u\|_*^2:= \epsilon \int_\Omega |\nabla u|^2 + \delta \int_\Omega u^2
\end{equation*}
which is a norm equivalent to the standard $H^1$ norm $\|\cdot\|_{H^1}$.

We prove the following

\begin{proposition}\label{estimate}
For every $\delta>0, \ \epsilon>0$ there exists a  solution $u_\epsilon$ to \eqref{pbeps}
that 
 satisfies 
\begin{equation}
0<\Phi_\epsilon (u_\epsilon) < C\epsilon.
\end{equation}
for some constant $C=C(\\lOmega,\delta). $ 
\end{proposition}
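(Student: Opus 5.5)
The solution will come from the Mountain Pass Theorem applied to a truncated functional, following Lin--Ni--Takagi. Set $g(t)=e^{t}-1-(1+\delta)t$ for $t\ge0$ and $g(t)=0$ for $t<0$, with primitive $G$, so $G=F_\delta$ on $[0,\infty)$ and $G=0$ on $(-\infty,0]$. Let $\tilde\Phi_\epsilon(u)=\frac{\epsilon}{2}\int_\Omega|\nabla u|^2-\int_\Omega G(u)$. In dimension two, the Trudinger--Moser inequality (in the Neumann version on $H^1(\Omega)$, e.g. Chang--Yang / Cianchi) gives $e^{u}\in L^q$ for every $u\in H^1$ and every $q$. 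Hence $\tilde\Phi_\epsilon\in C^1(H^1(\Omega))$.

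Critical points of $\tilde\Phi_\epsilon$ are nonnegative. Testing the equation with $u^-$ gives $\epsilon\int|\nabla u^-|^2=0$, so $u^-$ is constant, and a negative constant is not a solution because $\int g(u)=0$ would force $u\ge 0$. The strong maximum principle then gives $u>0$ unless $u\equiv0$. The trivial solution is excluded once the critical level is positive.

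The mountain pass geometry is checked as follows. For small $t\ge0$, $G(t)=-\frac{\delta}{2}t^2+O(t^3)$, and $G(t)\le -\frac\delta2 t^2+Ct^3e^{t}$ for all $t\ge0$. Using Trudinger--Moser to bound $\int |u|^3e^{|u|}\le C\|u\|_*^3$ for $\|u\|_*\le1$, we get $\tilde\Phi_\epsilon(u)\ge \frac12\min(1,\cdot)\|u\|_*^2-C\|u\|_*^3$. (Here the $\delta$-part must be handled carefully: $\frac\delta2\int u^2$ only controls $u^+$, but $G(u)=0$ for $u<0$, so $\tilde\Phi_\epsilon\ge\frac12\epsilon\|\nabla u\|^2+\frac\delta2\|u^+\|^2-C\|u^+\|^3$. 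Positivity on a small sphere then needs a slightly more careful splitting: write $u=u^+-u^-$ and use $\|u^+\|_{H^1}\le\|u\|_{H^1}$.) This gives $\tilde\Phi_\epsilon\ge\rho>0$ on a small sphere, with $\rho$ depending on $\epsilon$. Moreover $\tilde\Phi_\epsilon(t\phi)\to-\infty$ for any fixed $\phi\ge0$, $\phi\not\equiv0$.

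Palais--Smale follows from an Ambrosetti--Rabinowitz-type inequality. For large $t$ we have $g(t)t\ge\theta G(t)$ with any $\theta>2$, since both sides are exponential. This makes PS sequences bounded, and then the compactness of $u\mapsto e^{u}$ from bounded sets of $H^1$ into $L^q$ (Trudinger--Moser with subcritical exponent, since $e^{u}$ is subcritical in 2D) yields convergence. The mountain pass level is $c_\epsilon=\inf_\gamma\max\tilde\Phi_\epsilon\ge\rho>0$, which gives $\Phi_\epsilon(u_\epsilon)>0$ and $u_\epsilon\not\equiv0$.

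For the upper bound $c_\epsilon\le C\epsilon$ I would use the Lin--Ni--Takagi test path. Fix $x_0\in\Omega$ and a cutoff $\phi_\epsilon(x)=\phi((x-x_0)/\sqrt\epsilon)$ with $\phi\in C_c^\infty(B_1)$, $\phi\ge0$. A change of variables gives
\[
\tilde\Phi_\epsilon(t\phi_\epsilon)=\epsilon\Big(\frac{t^2}{2}\int_{\R^2}|\nabla\phi|^2-\int_{\R^2}G(t\phi)\Big)=\epsilon\,\Psi(t),
\]
because $\epsilon|\nabla\phi_\epsilon|^2$ scales exactly like $G$ under $dx=\epsilon\,dy$. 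Since $\Psi$ does not depend on $\epsilon$ and $\Psi(t)\to-\infty$, we get $c_\epsilon\le\epsilon\max_t\Psi(t)=C\epsilon$, with $C$ depending on $\delta$ and on the choice of $\phi$, and valid for $\epsilon$ small enough that $B_{\sqrt\epsilon}(x_0)\subset\Omega$. For larger $\epsilon$ the constant can be adjusted, because $c_\epsilon\le\max_t\tilde\Phi_\epsilon(t\psi)$ for a fixed $\psi$, and this is bounded by an affine function of $\epsilon$.

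The main obstacle is the functional setting in dimension two: the Trudinger--Moser bounds needed for $C^1$ regularity and the PS condition, together with the positivity of $\tilde\Phi_\epsilon$ near $0$ for a functional involving only $u^+$.
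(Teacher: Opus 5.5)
\textbf{There is a genuine gap: your truncation removes too much.} Setting $g\equiv 0$ on $(-\infty,0)$ also discards the linear term $-(1+\delta)t$ there. So on nonpositive functions $\tilde\Phi_\epsilon(u)=\frac{\epsilon}{2}\int_\Omega|\nabla u|^2$, which vanishes on constants.

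In particular, every negative constant $u\equiv -c$ is a critical point of $\tilde\Phi_\epsilon$ at level $0$. Your argument that it is not a solution fails: $g(-c)=0$, so $\int_\Omega g(-c)=0$ holds trivially. This breaks two steps of your plan.
\begin{enumerate}
\item Every sphere $\|u\|_{H^1}=r$ contains the constant $-r|\Omega|^{-1/2}$, where $\tilde\Phi_\epsilon=0$. So the claimed bound $\tilde\Phi_\epsilon\ge\rho>0$ on a small sphere is false. The inequality $\|u^+\|_{H^1}\le\|u\|_{H^1}$ goes the wrong way to give a lower bound in terms of $\|u\|_{H^1}$.
\item $u_n\equiv -n$ is an unbounded Palais--Smale sequence at level $0$. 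Your functional contains no $L^2$ term, so the Ambrosetti--Rabinowitz quantity $\tilde\Phi_\epsilon(u_n)-\frac1\theta\tilde\Phi_\epsilon'(u_n)u_n$ controls $\int_\Omega|\nabla u_n|^2$ but not the mean of $u_n$.
\end{enumerate}
As written, neither the mountain pass geometry nor the (PS) condition is established.

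\textbf{The fix} is to truncate only the superlinear part, as Lin--Ni--Takagi do, and keep the full coercive quadratic form. This is also what makes the paper's Step 1 work. The paper uses the untruncated $\Phi_\epsilon(u)=\frac12\|u\|_*^2-\int_\Omega(e^u-1-u-\frac{u^2}{2})$, so $0$ is a strict local minimum with respect to the whole norm $\|u\|_*$.

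Take $H(t)=e^t-1-t-\frac{t^2}{2}$ for $t\ge0$, $H(t)=0$ for $t<0$, and $J_\epsilon(u)=\frac12\|u\|_*^2-\int_\Omega H(u)$. Then:
\begin{enumerate}
\item $J_\epsilon=\Phi_\epsilon$ on nonnegative functions.
\item Your Trudinger--Moser bound gives $J_\epsilon>0$ on a small sphere.
\item The Ambrosetti--Rabinowitz inequality now bounds (PS) sequences in $H^1$.
\item Testing $J_\epsilon'(u)=0$ with $u^-$ gives $\|u^-\|_*^2=0$. So the mountain pass critical point is nonnegative, nontrivial (its level is positive), and positive by the strong maximum principle. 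This supplies the positivity that the paper only asserts.
\end{enumerate}

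Alternatively, you could keep your truncation. You would separate $0$ from the endpoint by the set $\{\|u^+\|_*=\rho\}$, using $\tilde\Phi_\epsilon(u)\ge\tilde\Phi_\epsilon(u^+)$, and prove (PS) only at positive levels. That route needs an extra argument excluding PS sequences whose mean tends to $-\infty$.

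\textbf{The upper bound.} Your scaling identity $\tilde\Phi_\epsilon(t\phi_\epsilon)=\epsilon\Psi(t)$ is correct. It is the same mechanism as the paper's cone $w_\epsilon$, presented more cleanly. For large $\epsilon$, however, a fixed nonconstant $\psi$ makes $\max_t\tilde\Phi_\epsilon(t\psi)$ grow like $\epsilon\log^2\epsilon$, which is not an affine bound. Take $\psi\equiv1$ instead: the path maximum is then $K_\xi|\Omega|$, independent of $\epsilon$.
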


Theorem \ref{non_constant} follows easily from the following elementary lemma.

\begin{lemma}

Again, for any $\epsilon>0$, we have that $u=0$ and $u=\xi$ are solutions to \eqref{pbeps} and that $\Phi_\epsilon(0)=0$ and $\Phi_\epsilon(\xi) = K_\xi|\Omega |$, where $K_\xi$ is a {\it positive}  constant independent of $\epsilon$. 
\end{lemma}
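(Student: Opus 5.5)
We will verify the lemma directly, treating the constant solutions first and then the two energy values. Write $g(t):=e^t-1-(1+\delta)t=f_{1+\delta}(t)$. Then $g(0)=0$, $g'(t)=e^t-(1+\delta)$ and $g''(t)=e^t>0$, so $g$ is strictly convex. Moreover $g'(0)=-\delta<0$ and $g(t)\to+\infty$ as $t\to+\infty$. Hence $g$ decreases on $(0,\log(1+\delta))$, is negative there, and has exactly one positive zero $\xi=\xi(\delta)>\log(1+\delta)$. Constants have zero gradient and zero normal derivative. Therefore $u\equiv0$ and $u\equiv\xi$ solve the equation and the Neumann condition in \eqref{pbeps} for every $\epsilon>0$. (Only $\xi$ satisfies the positivity constraint; $0$ is the trivial solution in the sense of Theorem \ref{non_constant}.)

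For the energies, the gradient term vanishes on constants, so $\Phi_\epsilon(c)=-F_\delta(c)|\Omega|$ for any constant $c$. This gives $\Phi_\epsilon(0)=-F_\delta(0)|\Omega|=0$, and $\Phi_\epsilon(\xi)=K_\xi|\Omega|$ with $K_\xi:=-F_\delta(\xi)$. The constant $K_\xi$ is clearly independent of $\epsilon$, because $\epsilon$ multiplies only the gradient term.

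The only substantive step is to show $K_\xi>0$, i.e.\ $F_\delta(\xi)<0$. Note that $F_\delta'=g$ and $F_\delta(0)=0$, so
\[
F_\delta(\xi)=\int_0^\xi g(t)\,dt .
\]
From the sign analysis above, $g<0$ on $(0,\xi)$: indeed $g$ is convex with $g(0)=g(\xi)=0$, and it is not identically zero on the interval since $g'(0)\neq0$. Hence the integral is strictly negative, and $K_\xi=-\int_0^\xi g>0$.

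I expect no real obstacle. The one point needing care is the sign convention: the text defines $F_\delta$ with the term $-u$, so that $F_\delta'(u)=e^u-1-(1+\delta)u$ matches the right-hand side of \eqref{pbeps}, and one should check this consistency. A further remark is that $0<\Phi_\epsilon(u_\epsilon)<C\epsilon<K_\xi|\Omega|$ for small $\epsilon$, together with Proposition \ref{estimate}, then shows $u_\epsilon\neq0,\xi$. Any other constant solution would be a root of $g$, and $0$ and $\xi$ are the only roots on $[0,\infty)$, so $u_\epsilon$ is nonconstant.
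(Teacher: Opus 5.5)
Your proof is correct, but you establish $K_\xi>0$ by a different route from the paper.

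\emph{The paper's route.} It writes $K_\xi=\frac{1+\delta}{2}\xi^2-(e^\xi-1-\xi)$ and eliminates $\delta$ through the relation $1+\delta=(e^\xi-1)/\xi$. This gives $K_\xi=h(\xi)$ with $h(x)=\frac{xe^x}{2}-e^x+1+\frac{x}{2}$. Positivity then follows from $h(0)=h'(0)=h''(0)=0$ together with $h''(x)=\frac{xe^x}{2}>0$ for $x>0$.

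\emph{Your route.} You use $F_\delta'=f_{1+\delta}$ and $F_\delta(0)=0$ to write $K_\xi=-\int_0^\xi f_{1+\delta}(t)\,dt$. Strict convexity of $f_{1+\delta}$ then gives $f_{1+\delta}<0$ on $(0,\xi)$, so the integral is negative.

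\emph{What each buys.} Your argument is more structural. It only uses that the nonlinearity vanishes at $0$ and at $\xi$ and is negative in between. So it carries over verbatim to the ``more general nonlinearities behaving like $f_a$'' mentioned in the paper's final remarks. You also prove the existence and uniqueness of $\xi$, which the paper takes for granted. In addition, you correctly note that $u\equiv 0$ does not satisfy the constraint $u>0$ in \eqref{pbeps}, so it is a solution only in the ``trivial'' sense. The paper's computation is tied to the exponential. In exchange, it expresses $K_\xi$ explicitly as a function of $\xi$ alone. That is convenient if one wants to compare $K_\xi|\Omega|$ quantitatively with the mountain-pass bound $C_2\epsilon$, and so estimate the threshold $\epsilon_0$.
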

\begin{proof}Let  $\xi >0 $, such that  $e^\xi-1-(1+\delta)\xi=0$. We have 
$$
K_\xi=\frac{1+\delta}{2}\, \xi^2-(e^\xi-1-\xi),
\quad {\rm and}\quad
(1+\delta)=\frac{e^\xi-1}{\xi}.
$$
Therefore,
$$
K_\xi=\frac{e^\xi}{2}\, \xi-e^\xi+1+\frac{\xi}{2}>0.
$$
Indeed, let $h(x)=\frac{e^x}{2}\, x-e^x+1+\frac{x}{2}$; then $h(0)=h'(0)=h''(0)=0$ and $h''(x)>0$ for all $x>0$.

\end{proof}

\begin{proof}[Proof of Proposition \ref{estimate}]
The existence of  $u_\epsilon$  will be proved by using the Mountain Pass Lemma by Ambrosetti and Rabinowitz \cite{AR}.

\medskip
\emph{Step 1: (Local minimum in $u=0$)}.  We first define the norm 
\begin{equation*}
\|u\|_*^2:= \epsilon \int_\Omega |\nabla u|^2 + \delta \int_\Omega u^2.
\end{equation*}
 Now, we have that, as $u\to0$ in $H^1$,
\begin{equation} \label{zero_min}
\Phi_\epsilon(u)=\frac12 \|u\|_*^2 + o(\|u\|^2_{H^1}),
\end{equation}
 which implies that $u=0$ is a strict local minimum. In particular, there exist  $\rho>0$ and $\alpha>0$ such that 
 \begin{equation}
 \Phi_\epsilon(u)\ge \alpha\epsilon,
\end{equation}
for any $u$ such that $\|u\|_*= \rho$.

 \medskip\noindent
 The proof of \eqref{zero_min} is fairly standard and relies on the fact that the nonlinearity $$g(t)=e^t-1-t-\frac{t^2}{2}=o(t^2)\quad
{\rm as}\quad  t \to 0.$$
For completeness we give here a sketch of the proof. In fact, we show that there exists a constant $C=C(\Omega)>0$ s.t.
\begin{equation}\label{zero_min2}
\Big{|} \int_\Omega \left(e^u-1-u-\frac{u^2}{2} \right)\Big{|} \leq C \|u\|_{H^1}^3 \,,\quad {\rm as}\ \  u\to 0.
\end{equation}

By the Taylor series expansion of the exponential, we have
$$
e^u-1-u-\frac{u^2}{2} = \sum_{k=3}^{+\infty} \frac{u^k}{k!}
$$
and thus it will be enough to prove that 
\begin{equation}\label{zero_min3}
\sum_{k=3}^{+\infty} \int_\Omega\frac{|u|^k}{k!} \leq  C \|u\|_{H^1}^3 , \ u\to 0\ \ {\rm in} \ H^1,
\end{equation}
 which will be proved as a consequence of the following estimate:
\begin{equation} \label{fede}
\|u\|_k \leq C\left(\Gamma\left( \frac{k}{2} + 1\right)\right)^{\frac{1}{k}} \|u\|_{H_1} \,,
\end{equation}
for some $C$ independent of $k$. We notice that, by symmetrization, it is enough to prove \eqref{fede} for radial functions defined in a ball $B_R$ centered at the origin. Hence, by assuming $u(x) = \tilde u (r)$ for $r=|x|$, we have that 
$$
v(r) = \tilde u(r) - \tilde u (R)
$$
is a radial function in $H_0^1(B_R)$ and 
\begin{equation} \label{nablavu}
\int_{B_R} |\nabla v|^2 = \int_{B_R} |\nabla \tilde u|^2 \,.
\end{equation}
Since $\tilde u \in H^1$ is radial,  we have (see \cite{Ruf})
\begin{equation} \label{ruf}
|\tilde u(r)| \leq \frac{1}{r\sqrt{\pi}} \|\tilde u\|_{L^2}
\end{equation}
for any $r>0$. For $v \in H_0^1(B_R)$, it yields (see e.g \cite{CT}) 
\begin{equation} \label{calaterra}
\|v\|_k \leq  C\left(\Gamma\left( \frac{k}{2} + 1\right)\right)^{\frac{1}{k}} \|\nabla v\|_{2},
\end{equation}
where the constant $C=C(\Omega)$ depends only on $\Omega$, and $\Gamma$ is the {\it Gamma-function}.

\smallskip
Since $\tilde u(r) = v(r) + \tilde u(R)$, from Minkowski inequality and using \eqref{ruf} and \eqref{calaterra} we obtain
\begin{equation*}
\begin{split}
\|\tilde u\|_k & \leq \|v\|_k + \left( \int_{B_R} |\tilde u(R)|^k \right)^{\frac{1}{k}}  \\
& \leq C\left(\Gamma\left( \frac{k}{2} + 1\right)\right)^{\frac{1}{k}} \|\nabla v\|_{2} + \frac{1}{R\sqrt{\pi}} |B_R|^{\frac{1}{k}} \|\tilde u\|_{L^2} 
\end{split}
\end{equation*}
and, from 
$$
\lim_{k\to +\infty} \left(\Gamma\left( \frac{k}{2} + 1\right)\right)^{\frac{1}{k}} = + \infty
$$
and \eqref{nablavu} we obtain 
\begin{equation} \label{ukbound}
\|\tilde u \|_k \leq C \left(\Gamma\left( \frac{k}{2} + 1\right)\right)^{\frac{1}{k}}  \|\tilde u\|_{H^1(B_R)} \,.
\end{equation}
By symmetrization, \eqref{ukbound} holds also for $u\in H^1$ (not necessarily radial). Hence
$$
\sum_{k=3}^{\infty} \frac{\|u\|_k^k}{k!} \leq \sum_{k=3}^{\infty} \frac{C^k}{k!} \Gamma\left( \frac{k}{2} + 1\right) \|u\|_{H^1}^k \leq \bar C  \sum_{k=3}^{\infty} \|u\|_{H^1}^k \,,
$$
and then
$$
\sum_{k=3}^{\infty} \frac{\|u\|_k^k}{k!} \leq C \|u\|_{H^1}^3 
$$
for $\|u\|_{H^1}^3\leq 1/2$, which is \eqref{zero_min3}. This implies \eqref{zero_min2} which proves \eqref{zero_min}. Thus we have 

\begin{equation}\label{Phiupiccolo}
 \frac12 \min(\epsilon, \delta) \|u\|_{H^1}^2 + o(\|u\|_{H^1}^2) \leq \Phi_\epsilon(u) \leq \frac12 \max(\epsilon^{-1}, \delta^{-1}) \|u\|_{H^1}^2 + o(\|u\|_{H^1}^2)
\end{equation} 
for $\|u\|_{H^1}^2 \to 0$. The first inequality in \eqref{Phiupiccolo} yields that there exist $\rho>0$ and $\alpha>0$ such that

\begin{equation}\label{alpha}
\Phi_\epsilon(u) \geq \alpha \epsilon>0 \quad \textmd{ for }  \ \|u\|_{H^1} = \rho \,.
\end{equation} 

\bigskip
\emph{Step 2: there exists $w$ with $\|w\| \geq \rho$ such that $\Phi_\epsilon(w) \leq 0$}. This step is easily achieved by evaluating the functional on constant functions. Indeed, let $K \in \R$, we have
$$
\Phi_\epsilon(K) = \frac{\delta}{2}\int_\Omega K^2 - \int_\Omega \left(e^K-1-K-\frac{K^2}{2}\right) \to -\infty \quad \textmd{ as  } K \to +\infty \,.
$$
Now we choose  $w=\bar K$ such that $\Phi_\epsilon(\bar K) <0$ and $\| \bar K\|\ge \rho$.

From \emph{Step 1} and \emph{Step 2} we have that the energy functional $\Phi_\epsilon$ satisfies the geometric hypotheses of the Mountain Pass Lemma.

\bigskip

\emph{Step 3:  (The functional satisfies the Palais-Smale condition).} 
This follows directly from the subcritical nature of the nonlinearity 
 $g(t)=e^t-1-t-\frac{t^2}{2}$  in two dimensions.

\medskip
Therefore, all the hypotheses  of the Mountain Pass Lemma are satisfied. Let 

$$\Sigma :=\{ \gamma\in C([0,1], H^1):\ \gamma (0)=0, \gamma (1)=\bar u,\, \Phi_\epsilon(\bar u)<0, \ ||\bar u||_{H^1}>\rho \}\neq \emptyset\quad{\rm from} \ Step\ {\it 2}.$$
Then
\begin{equation} c_\epsilon := \inf_{\gamma\in \Sigma}\, \sup_{u\in \gamma([0,1])} \Phi_\epsilon (u),
\end{equation} 
 is a critical level for $\Phi_\epsilon$.
\\

Therefore, there exists  $u_\epsilon $ a positive critical point  (called  ground-state solution)  corresponding to the critical value\ $c_\epsilon$, i.e. $\Phi_\epsilon(u_\epsilon)=c_\epsilon$.

\medskip

\emph{Step 4: ($\Phi_\epsilon(u_\epsilon)=c_\epsilon\asymp \epsilon$ as $\epsilon\to 0^+$): } we need to prove that there exist two constants $d_1$ and $d_2$ such that 
$$
d_1\epsilon\ \le\  c_\epsilon\le\  d_2\epsilon.
$$
The estimate on the left is a direct consequence of \eqref{alpha}, with $d_1=\alpha$. 

\smallskip\noindent Indeed,  $\sup_{u\in \gamma([0,1])} \Phi_\epsilon (u)\ge \alpha \epsilon$, since every path $\gamma$ must intersect the ball $||u||=\rho$.

\medskip
For the estimate on the right we follows the argument in \cite{LNT}.

\smallskip\noindent
Without loss of generality, suppose now that $0\in \Omega$ and consider the following family of functions
$$
\omega_\epsilon (x):=
\left\{
\begin{array}{lcl}
	\displaystyle \epsilon^{-1}\Big( 1-\epsilon^{-\frac{1}{2}}|x|\Big),\,\, |x|\leq \sqrt\epsilon ,\\
	& & \\
	\displaystyle 0, \,\,\,\,\,\,\,\,\,\,\,\,\,\,\,\,\,\,\,\,\,\,\,\,\,\,\,\,\,\,\,\,\,\,\,\,\,\,\,\,\,\,\,|x|> \sqrt\epsilon.& \;\;
\end{array}\right.
$$
Then, for all $k\in \mathbb{N}$,
$$
\int_\Omega |w_\epsilon |^k\, dx=c_k\epsilon ^{1-k}
$$
where 
$$
c_k=2\pi \int_0^1\, (1-\rho)^k\rho\, d\rho =2\pi B(2,k+1)=\frac{2\pi}{(k+2)(k+1)}.
$$
and $B(\cdot,\cdot) $ is the $\beta eta$-function. 

\medskip
We have the following identities, proved in \cite{LNT},
$$
\int_\Omega |w_\epsilon |^k\, dx=\frac{2\pi}{(k+2)(k+1)}\epsilon ^{1-k},
\\
\int_\Omega |\nabla w_\epsilon |^k\, dx=\pi \epsilon ^{-2}.
$$
We evaluate the functional $\Phi_\epsilon$ on the half-line $x=tw_\epsilon$, for all $t>0$:
$$
g_\epsilon(t):=\Phi_\epsilon (tw_\epsilon)=\frac{\pi}{2\epsilon}\bigg( 1+\frac{\delta}{6}\bigg)t^2- 
2\pi\epsilon \sum_{k=3}^{+\infty}\, \frac{1}{(k+2)!}\bigg(\frac{t}{\epsilon}\bigg)^k.
$$

To conclude it is sufficient to prove that

\medskip
\begin{enumerate}

\item[$(g_1)$]\  $g_\epsilon(t)$ has a minimum in $t=0$;

\medskip

\item[$(g_2)$]\  $g_\epsilon(t)\to-\infty$, as $t\to+\infty$;

\medskip

\item[$(g_3)$] \ $g_\epsilon$ has a maximum and $\displaystyle \max_{t\ge 0}g_\epsilon(t)\le C_2\epsilon.$
\end{enumerate}

\medskip
Note that - from $(g_1)$ and $(g_2)$ - the function $t\mapsto t w_\epsilon$ - up to re-parametrization - is an ammissible path belonging to the class $\Sigma$ defined above.

\medskip
%
We have that  $
g_\epsilon(t)=\eta \big( {t}/{\epsilon}\big),$ where
 $$\eta(x) :=\frac{\pi}{2}\Big(1+\frac{\delta}{6}\Big)\epsilon x^2- \frac{2\pi \epsilon}{x^2}\Big(e^x-1-\frac{x^2}{2}-\frac{x^3}{3!}-\frac{x^4}{4!}\Big), \, x\in [0,+\infty).$$ 
\\

It follows easily that $$\eta(x) =\frac{\pi}{2}\Big(1+\frac{\delta}{6}\Big)\epsilon x^2+o(x^2), \ \  {\rm as}\ \ x\to 0^+, \ {\rm and}\quad 
\eta(x)\to -\infty, \quad {\rm as} \ \ x\to +\infty,
$$
\\
so that $$g_\epsilon(t)=\frac{\pi}{2\epsilon}\Big( 1+\frac{\delta}{6}\Big) t^2+o(t^2) \quad  {\rm as}\ \  t\to 0^+, \ \ {\rm and}\quad 
 g(t)\to -\infty,  \quad {\rm as} \ \ t\to +\infty.$$ Therefore, $t=0$ is a local minimum for $g_\epsilon$ and $g_\epsilon$ has an absolute maximum point in $(0,+\infty )$.  
\\
Now we are left to prove that  there exists $C_2>0$ such that $\max_{t\in \mathbb{R}^+}\, g_\epsilon(t)\leq C_2\epsilon$.
\\
 First we observe that (here  $A=\frac{\pi}{2}\Big(1+\frac{\delta}{6}\Big)$ and  $B=2\pi$)
 $$
\eta (x)\leq \epsilon(A x^2-B \frac{x^3}{5!})=\epsilon h(x),
$$
\\
since
$$
e^x-\sum_{j=0}^{4}\, \frac{x^j}{j!}\geq \frac{x^{5}}{5!},\quad  \forall x\in [0,+\infty).
$$
\\
Therefore, $$\max_{x\in (0,+\infty)}\, \eta(x)\leq \epsilon\max_{x\in (0,+\infty)}\, h(x)=C_2\epsilon$$
This is sufficient to conclude the proof since the maximum of $h$ does not depend on $\epsilon$
$$
\max_{t\in (0,+\infty)}\, g_\epsilon(t)=\max_{x\in (0,+\infty)}\, \eta(x)\le C_2\epsilon.
$$
Anyway, we can give an estimate of the constant $C_2$. Indeed, 
it is easy to prove that
$$
C_2=\max_{x\in (0,+\infty)}\, h(x)=A \bigg(\frac{2A5!}{3B}\bigg)^2-\frac{B}{5!}\bigg(\frac{2A5!}{3B}\bigg)^3 =
\frac{6400 A^3}{3B^2}.
$$

\end{proof}


\subsection{Some final remarks}
With a slight modification, the result can be extended to more general nonlinearities behaving like $f_a$. 

\medskip

Another  interesting question is whether, for large values of the diffusion parameter $\epsilon$, only constant solutions can exist, as happens in the polynomial problem  (see e.g. \cite{NT}). The argument relies on a fundamental a priori estimate for the solutions. In the exponential case, however, such an a priori estimate does not generally hold. Despite this, we believe that for large values of 
$\epsilon$ only constant solutions actually persist. We therefore formulate the following conjecture.

\medskip
{\bf Conjecture.} {\it
There exists $\bar \epsilon$ such that for every $\epsilon>\bar\epsilon$  the Problem \ref{pbeps} admits only the constant solutions.}

\section*{Acknowledgments} \noindent {\small The first two authors are members of the Gruppo Nazionale per l'Analisi Matematica, la Probabilit\`a e le loro Applicazioni (GNAMPA) of the Istituto Nazionale di Alta Matematica (INdAM, Italy) and have been partially supported by the INdAM - GNAMPA Project 2026, CUP \#E53C25002010001\#.}

\end{document}